\documentclass[12pt]{article}
\usepackage{array,epsfig}
\usepackage{amsmath}
\usepackage{amsfonts}
\usepackage{amssymb}
\usepackage{amsxtra}
\usepackage{amsthm}
\usepackage{mathrsfs}
\usepackage{color}
\usepackage{float}
\usepackage{setspace}
\usepackage{hyperref}
\hypersetup{allcolors=black}
\theoremstyle{definition}

\newtheorem{thm}{Theorem}

\newcommand{\ra}{\rightarrow}

\newcommand{\xra}{\xrightarrow}

\newcommand{\bb}[1]{\mathbb{#1}}
\def\ba#1\ea{\begin{aligned}#1\end{aligned}}
\newcommand{\Z}{\bb{Z}}

\newcommand{\R}{\bb{R}}
\newcommand{\C}{\bb{C}}

\newcommand{\equivclass}[1]{#1/{\sim}}

\DeclareMathOperator{\Gr}{Gr}
\DeclareMathOperator{\GL}{GL}

\DeclareMathOperator{\Hom}{Hom}

\DeclareMathOperator{\Ext}{Ext}

\DeclareMathOperator{\SO}{SO}
\DeclareMathOperator{\Ort}{O}
\DeclareMathOperator{\SU}{SU}
\DeclareMathOperator{\Spin}{Spin}
\DeclareMathOperator{\Flag}{Flag}

\providecommand{\keywords}[1]{{\textit{Keywords:} #1}}
\usepackage{pdfpages}

\pagenumbering{arabic}
\newcommand{\Address}{{
		\bigskip
		\footnotesize
		A.B.~Yetişer, \textsc{Department of Mathematics, National Research University Higher School of Economics, Moscow, Russia}\par\nopagebreak
		\textit{E-mail address:} \texttt{ayetisher@edu.hse.ru, abyetiser@gmail.com}
}}
\begin{document}
\title{The space of all triples of projective lines of distinct intersections in $\mathbb{RP}^n$}
\date{}
\author{Ali Berkay Yetişer}
\maketitle
\setcounter{page}{1}
\begin{abstract}
	We study the space of all triples of projective lines in $\mathbb{RP}^n$ such that any line in a triple intersects the two others at distinct points. We show that for $n=2$ and $3$ these spaces are homotopically equivalent to the real complete flag variety $Flag(\mathbb{R}^n)$ for $n=3$ and $4,$ respectively, and we explicitly calculate the integral homology of the corresponding spaces. We prove that for arbitrary $n$, this space is homotopy equivalent to $Flag(1,2,3,\mathbb{R}^{n+1}),$ the variety of all partial flags of signature $(0,1,2,3,n+1)$ in an $(n+1)$-dimensional vector space over $\mathbb{R}.$
	
	\vspace{2mm}
	
	\noindent\keywords{flag variety, homogeneous space, homotopy equivalence, projective configuration}
\end{abstract}

\section{Introduction}\label{sec1}
We wish to study the topology of the space $\mathcal H$ of all triples of projective lines in $\bb {RP}^3$ such that any line in a triple intersects the two others at distinct points. A triple of such lines spans a unique projective plane in $\bb{RP}^3.$ We are also interested in the corresponding problem in $\bb{RP}^2$ because it will come up as the fibers of a fibration $\mathcal H\ra \bb{RP}^3,$ and the problem is more straightforward in $\bb {RP}^2$ because the corresponding matrices have full rank. Hence, we will first study the space $\mathcal H'$ of all triples of lines $(\ell_1,\ell_2,\ell_3)$ in $\bb {RP}^2$ with the same condition, that is, any line in a triple intersects the two others at distinct points. In Section \ref{sec2}, we will prove the following results.
\newtheorem*{thm1}{Theorem \ref{thm1}}
\begin{thm1}
	The space $\mathcal H'$ is homotopy equivalent to the quotient of $\SU(2)$ by the free action of the dihedral group of order $8,$ $D_8.$
\end{thm1}

\newtheorem*{thm2}{Theorem \ref{thm2}}
\begin{thm2}
	The integral homology groups of $\mathcal H'$ are as follows. $$H_0(\mathcal H')\cong H_3(\mathcal H')\cong \Z, H_1(\mathcal H')\cong \Z/2\Z\times \Z/2\Z,$$ and for any other subscript the homology is trivial.  
\end{thm2} 

In Section \ref{sec3}, we will prove that the space $\mathcal H$ is homotopically equivalent to a complete flag variety.
\newtheorem*{thm3}{Theorem \ref{thm3}}
\begin{thm3}
	The space $\mathcal H$ is homotopy equivalent to $\Flag(\R^4).$
\end{thm3}
We will explicitly calculate the integral homology groups of $\mathcal H$ using the theory of homogeneous spaces and the Serre spectral sequence associated to the fiber bundle $\pi:\mathcal H\ra \bb{RP}^3.$ 

\newtheorem*{thm4}{Theorem \ref{thm4}}
\begin{thm4}
	The integral homology groups of $\mathcal H\cong \Flag(\R^4)$ are as follows. $$H_0(\mathcal H)\cong H_6(\mathcal H)\cong\Z, H_1(\mathcal H)\cong H_4(\mathcal H)\cong (\Z/2\Z)^3,$$ $$H_2(\mathcal H)\cong (\Z/2\Z)^2, H_3(\mathcal H)\cong \Z^2\times (\Z/2\Z)^2,$$ and for any other subscript homology is trivial.
\end{thm4}

In Section \ref{sec4}, we will generalize the homotopy equivalence between $\mathcal H$ and $\Flag(\R^4)$ to a homotopy equivalence between the space of triples of projective lines of distinct intersections in $\bb{RP}^n$ and the partial flag variety $\Flag(1,2,3,\R^{n+1})$. 
\newtheorem*{thm5}{Theorem \ref{thm5}}
\begin{thm5}
	The space of all triples of projective lines in $\bb{RP}^{n}$ such that any line in a triple intersects the two others at distinct points is homotopy equivalent to the real partial flag variety $\Flag(1,2,3,\R^{n+1})$ consisting of partial flags of signature $(0,1,2,3,n+1)$.
\end{thm5}

\section{The space $\mathcal{H}'$}\label{sec2}
The projective lines $\ell_i\subset \bb{RP}^2$ correspond to planes passing through the origin in $\R^3,$ $\sum_{j=1}^3 a_{ij}x_j=0$ and a plane passing through the origin in $\R^3$ can be described with a normal vector to the plane, whose components are the coefficients $a_{ij}.$ The intersection condition, which says that any two planes intersect at a distinct line, is equivalent to saying that the three normal vectors are linearly independent which translates to the fact that the rows of the matrix $A=[a_{ij}]$ are linearly independent. Since the rows of the matrix $A$ correspond to the normal vectors, if a normal vector is scaled by a non-zero scalar $\lambda \in \R$ it describes the same plane passing through the origin. Hence the space $\mathcal H'$ is the quotient $\equivclass{\GL(3,\R)}$ where the equivalence relation $\sim$ is defined as follows. For $A,B\in \GL(3,\R),$ $A\sim B$ if for any $i=1,2,3$ there exists a non-zero scalar $\lambda_i\in \R$ such that $A_i=\lambda_i B_i$ where $A_i$ is the $i$th row of the matrix $A$ and similarly $B_i$ is the $i$th row of the matrix $B.$ We may write this quotient space as $\GL(3,\R)/T$ where $T$ is the maximal torus in $\GL(3,\R)$ consisting of diagonal matrices.

\begin{thm}\label{thm1}
	The space $\mathcal H',$ that is, the quotient $\equivclass{\GL(3,\R)} = \GL(3,\R)/T$ is homotopy equivalent to the quotient of $\SU(2)$ by the free action of the dihedral group of order $8,$ $D_8.$
\end{thm}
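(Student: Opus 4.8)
The plan is to strip off the torus in two stages, separating its contractible part from its finite part, and then to recognise the finite quotient as a spherical space form. Write $T=T^{+}\times\Sigma$, where $T^{+}\cong(\R_{>0})^{3}$ is the subgroup of positive diagonal matrices and $\Sigma=\{\pm1\}^{3}\subset\Ort(3)$ is the group of diagonal sign matrices, so $|\Sigma|=8$. Since the quotient is formed for the left multiplication action, $\mathcal H'=\Sigma\backslash\bigl(T^{+}\backslash\GL(3,\R)\bigr)$. The factor $T^{+}$ is contractible, so quotienting by it should cost nothing homotopically, and the entire content is carried by the residual action of the finite group $\Sigma$.

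First I would identify $Y:=T^{+}\backslash\GL(3,\R)$ with the space of ordered triples $(u_1,u_2,u_3)$ of linearly independent unit vectors in $\R^{3}$, since dividing a nonzero row by a positive scalar is exactly normalising it. The residual action is $\epsilon\cdot(u_1,u_2,u_3)=(\epsilon_1u_1,\epsilon_2u_2,\epsilon_3u_3)$, which is free because each $u_i\neq0$. The key observation is that Gram--Schmidt orthonormalisation $r:Y\to\Ort(3)$ (rows equal to the resulting orthonormal frame) is $\Sigma$-equivariant: replacing $u_i$ by $\epsilon_iu_i$ only scales the $i$-th Gram--Schmidt vector by $\epsilon_i$, since the inner-product coefficients pick up compensating signs, using $\epsilon_i^{2}=1$. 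The corresponding straight-line Gram--Schmidt homotopy, renormalised at each time (the rows stay linearly independent, hence nonzero), is $\Sigma$-equivariant by the same cancellation and defines a deformation retraction of $Y$ onto $\Ort(3)$. A $\Sigma$-equivariant deformation retraction between free $\Sigma$-spaces descends to orbit spaces, so $\mathcal H'=\Sigma\backslash Y\simeq\Sigma\backslash\Ort(3)$.

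It then remains to analyse $\Sigma\backslash\Ort(3)$. I would split $\Ort(3)=\SO(3)\sqcup\SO(3)^{-}$ and note that the four determinant-$(-1)$ elements of $\Sigma$ interchange the two sheets, while $V:=\Sigma\cap\SO(3)\cong(\Z/2)^{2}$ preserves each sheet; hence every $\Sigma$-orbit meets $\SO(3)$ in a single $V$-orbit and $\Sigma\backslash\Ort(3)\cong V\backslash\SO(3)\cong\SO(3)/V$. Here $V$ consists of the identity and the three rotations by $\pi$ about the coordinate axes. Pulling back along the double cover $\SU(2)\to\SO(3)$, the preimage of $V$ is an order-eight subgroup $\widetilde V\subset\SU(2)$; since $\SU(2)$ has a unique element of order $2$ and $\widetilde V$ is nonabelian (the three $\pi$-rotations lift to anticommuting $\pm i,\pm j,\pm k$), one finds $\widetilde V\cong Q_8$, the quaternion group, which is the binary dihedral group of order $8$ referred to as $D_8$ in the statement. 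As the left action is free, $\SO(3)/V\cong\SU(2)/\widetilde V$, giving $\mathcal H'\simeq\SU(2)/Q_8$ with $Q_8$ acting freely.

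The main obstacle is not any single computation but keeping the whole reduction equivariant. The familiar deformation retraction $\GL(3,\R)\simeq\Ort(3)$ is equivariant only for left multiplication by orthogonal matrices, so it cannot legitimately be applied before the positive torus $T^{+}$ has been removed; I must first pass to $Y$ and verify at that stage that Gram--Schmidt commutes with the sign action. A secondary point deserving care is the identification of the lifted group: checking directly that $\widetilde V$ is nonabelian forces $\widetilde V\cong Q_8$ rather than $\Z/8$, and rules out the genuine dihedral group, which admits no free action on $S^{3}$ at all. Consistency with Theorem \ref{thm2} provides a reassuring sanity check, since $\pi_1(\SU(2)/Q_8)\cong Q_8$ has abelianisation $(\Z/2)^{2}$, matching $H_1(\mathcal H')$, while $\SU(2)/Q_8$ is a closed orientable $3$-manifold, matching $H_3(\mathcal H')\cong\Z$.
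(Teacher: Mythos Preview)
Your argument follows the same route as the paper's: Gram--Schmidt retracts $\GL(3,\R)/T$ onto $\Ort(3)/\Sigma\cong\SO(3)/V$ with $V\cong(\Z/2)^2$, and one then lifts along the double cover $\SU(2)\to\SO(3)$. You are more scrupulous about equivariance than the paper, first stripping off the contractible factor $T^{+}$ and then checking explicitly that row-wise Gram--Schmidt commutes with the sign action; the paper simply asserts that the retraction descends to the quotient.

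More significantly, you correctly identify the lifted group as the quaternion group $Q_8=\{\pm1,\pm i,\pm j,\pm k\}$ and observe that the ordinary dihedral group of order $8$ cannot act freely on $S^3$ at all, since it contains a noncyclic subgroup $(\Z/2)^2$. The paper's proof in fact misidentifies $K^{*}$ as $D_8$; your determination is the right one, and your remark that $Q_8$ is the \emph{binary} dihedral group of order $8$ is the charitable reading of the statement. The slip is harmless downstream, since $Q_8$ and $D_8$ have the same abelianisation $(\Z/2)^2$, so the computation of $H_1(\mathcal H')$ in Theorem~\ref{thm2} is unaffected.
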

\begin{proof}
	By using QR factorization we can write $\GL(3,\R) = \Ort(3,\R) \boldsymbol\cdot U$ where $\Ort(3,\R)$ is the subgroup of $\GL(3,\R)$ consisting of $3$ by $3$ orthogonal matrices with real entries and $U$ is the subgroup of $\GL(3,\R)$ consisting of upper-triangular matrices with positive diagonal entries. 
	
	The deformation retraction of $\GL(3,\R)$ to $\Ort(3,\R)$ induces a deformation retraction of $\equivclass{\GL(3,\R)} =  \GL(3,\R)/T$ to $\equivclass{\Ort(3,\R)}=\Ort(3,\R)/(T\cap \Ort(3,\R)).$ Note that $T\cap \Ort(3,\R)$ consists of diagonal matrices with entries $\pm 1$.
	
	The subgroup $\Ort(3,\R)$ has two connected components and there is an element of $T\cap \Ort(3,\R)$ with negative determinant that can take an element of the connected component consisting of orthogonal matrices with determinant $-1$ to the other connected component $\SO(3,\R)$, so we see that $\Ort(3,\R)/(T\cap \Ort(3,\R))$ is diffeomorphic to $\SO(3,\R)/(T\cap \SO(3,\R))$ where $T\cap \SO(3,\R)$ is isomorphic to the Klein $4$-group $V_4\cong \Z/2\Z \times \Z/2\Z.$ 
	
	Now the universal covering group of $\SO(3,\R)$ is $\SU(2)$ with the kernel of the universal covering map $\pi:\SU(2)\ra \SO(3,\R)$ being $Q:=\ker \pi \cong \Z/2\Z.$ The lifting of $K:=T\cap \SO(3,\R)$ to the covering $\SU(2)$ is the nontrivial central extension $K^*$ of $K$ by the center of $\SU(2)$ which is isomorphic to the dihedral group of order $8,$ $D_8,$ we have two short exact sequences $$1\rightarrow Q (\cong \mathbb Z / 2\Z)\rightarrow \SU(2)\rightarrow \SO(3)\rightarrow 1$$ $$1\rightarrow Q(\cong\Z /2\Z)\rightarrow K^* (\cong D_8)\rightarrow  K (\cong \Z/ 2\Z \times \Z/2\Z)\rightarrow 1$$ and $Q$ and $K^*$ are normal in $\SU(2),$ and since $\SU(2)/Q\cong \SO(3,\R),$ we get $\SU(2)/K^* \cong \SO(3,\R)/K\cong \mathcal{H}'.$
\end{proof}

By definition a \textit{flag} in an $n$-dimensional vector space $V$ over a field $k$ is a sequence of subspaces of $V,$ $0=V_0\subset V_1\subset \cdots V_m=V$ such that $d_i=\dim V_i$ and $0=d_0<d_1<\cdots <d_m=n.$ A flag is called a \textit{complete flag} if $d_i=i$ for all $i.$ A \textit{standard flag} associated to an ordered basis of $V$ is a flag in which $i$th subspace is spanned by the first $i$ basis vectors. Let us call the variety of complete flags in an $n$-dimensional vector space over real numbers, $\Flag(\R^n).$ 

Note that $\mathcal H'$ is a homogeneous space and it is homotopy equivalent to the variety of complete flags in a three-dimensional vector space over the real numbers, $\Flag(\R^3)$. The general linear group $\GL(3,\R)$ acts transitively on the set of all complete flags in $\R^3,$ and with respect to a fixed basis the stabilizer of the standard flag is the group of nonsingular lower triangular matrices $B_3$, and so the real complete flag variety is the quotient $\GL(3,\R)/B_3$ which is again homotopy equivalent to the space $\Ort(3,\R)/(T\cap \Ort(3,\R))$ described in Theorem \ref{thm1}. 

We will calculate the integral homology groups of $\mathcal H'$ using Theorem \ref{thm1}. Note that the homology of the corresponding real complete flag variety is calculated in \cite{6}, and so it constitutes a verification of the result of the next theorem.

\begin{thm}\label{thm2}
	The integral homology groups of $\mathcal H'\cong \equivclass{\GL(3,\R)}\cong \SU(2)/K^*,$  are as follows. $$H_0(\mathcal H')\cong H_3(\mathcal H')\cong \Z, H_1(\mathcal H')\cong \Z/2\Z\times \Z/2\Z,$$ and for any other subscript the homology is trivial.  
\end{thm}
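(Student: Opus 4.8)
The plan is to exploit the fact that Theorem \ref{thm1} exhibits $\mathcal H'$ as the orbit space of a free action of a finite group on a sphere, so that it is a closed $3$-manifold whose homology is pinned down by Poincaré duality together with the universal coefficient theorem. First I would record the relevant geometric structure: $\SU(2)$ is diffeomorphic to $S^3$, and the finite subgroup $K^*\cong D_8$ acts on it by translations. Such an action is free, since in a group translation by a nonidentity element has no fixed points, and it is properly discontinuous because $K^*$ is finite. Hence the quotient $M:=\SU(2)/K^*\cong\mathcal H'$ is a closed connected smooth $3$-manifold, and the projection $\SU(2)\ra M$ is its universal cover, as $\SU(2)\cong S^3$ is simply connected. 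Because translations on a Lie group lie in the identity component of its diffeomorphism group, they preserve orientation, so the action is by orientation-preserving diffeomorphisms and $M$ is orientable.

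From this structure three of the four groups are immediate. Since $M$ is connected, $H_0(M)\cong\Z$; since it is a closed connected orientable $3$-manifold, $H_3(M)\cong\Z$; and since $\SU(2)$ is the universal cover, $\pi_1(M)\cong K^*\cong D_8$, so that $H_1(M)\cong\pi_1(M)^{\mathrm{ab}}$. I would then compute this abelianization directly: the commutator subgroup of $D_8$ coincides with its center and has order $2$, and the quotient is the Klein four-group, so $H_1(M)\cong\Z/2\Z\times\Z/2\Z$, matching the stated value.

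The only remaining group is $H_2(M)$, and here I would invoke Poincaré duality for the closed orientable $3$-manifold $M$, namely $H_2(M)\cong H^1(M)$. By the universal coefficient theorem, $H^1(M)\cong\Hom(H_1(M),\Z)\oplus\Ext^1_{\Z}(H_0(M),\Z)$; the $\Ext$ term vanishes because $H_0(M)\cong\Z$ is free, and $\Hom(\Z/2\Z\times\Z/2\Z,\Z)=0$ because the group is torsion, so $H^1(M)=0$ and therefore $H_2(M)=0$. All homology in degrees outside $\{0,1,2,3\}$ vanishes because $M$ is a $3$-manifold, which completes the computation.

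I do not expect a genuine obstacle in this argument; the single point demanding care is the verification that $M$ is a closed \emph{orientable} $3$-manifold, so that Poincaré duality applies, together with the routine identification of $\pi_1(M)^{\mathrm{ab}}$. It is worth noting that this duality route deliberately avoids computing the group homology $H_2(D_8)$ directly: the vanishing of $H_2(M)$ is forced by the $3$-manifold structure rather than by any property of the order-$8$ extension, which makes the argument clean and self-contained.
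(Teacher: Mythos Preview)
Your argument is correct and follows essentially the same route as the paper: identify $\mathcal H'\simeq\SU(2)/K^*$ as a closed connected orientable $3$-manifold with universal cover $S^3$, read off $H_0$, $H_3$, and $H_1\cong(D_8)^{\mathrm{ab}}\cong(\Z/2\Z)^2$ via Hurewicz, and then kill $H_2$ using Poincar\'e duality together with the universal coefficient theorem. Your justification of orientability (translations lie in the identity component of the diffeomorphism group) is in fact a bit crisper than the paper's.
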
 
\begin{proof}
	Let us compute the integral homology of $\SU(2)/K^*\cong \SO(3)/K$. Since the equivalence relation acts smoothly and freely on the smooth manifold $\SO(3)$, the dimension is $\dim(\SO(3,\R))-\dim K=3-0=3.$ The quotient is compact and connected because $\SO(3)$ is compact and connected. For any $\gamma \in K,$ the diffeomorphism $\gamma\cdot -:\SO(3)\ra \SO(3)$ which is defined as $x\mapsto \gamma x$ is orientation-preserving, so $K$ is orientation-preserving, hence the quotient is orientable because $\SO(3,\R)$ is a closed, oriented, smooth manifold and $K$ is a discrete group acting freely and properly on $\SO(3,\R).$ Thus, $H_0(\mathcal H')\cong H_3(\mathcal H')\cong \Z$. 
	
	By the Hurewicz theorem $H_1(\SU(2)/K^*)$ is isomorphic to the abelianization of $\pi_1(\SU(2)/K^*)\cong K^*\cong D_8$ which is the Klein $4$-group $\Z/2\Z\times \Z/2\Z.$ Hence, $H_1(\mathcal H')\cong \Z/2\Z \times \Z/2\Z.$ 
	
	Finally, by the Poincaré duality $H_2(\SU(2)/K^*)$ is isomorphic to $H^1(\SU(2)/K^*).$ By the universal coefficients theorem for cohomology and since $$\Ext^1(H_0(\SU(2)/K^*),\Z)=\Ext^1(\Z,\Z)$$ is trivial we have $$H_2(\mathcal H')\cong H_2(\SU(2)/K^*)\cong H^1(\SU(2)/K^*)\cong \Hom(H_1(\SU(2)/K^*),\Z)\cong 0.$$
\end{proof}
We could have taken a possible shortcut in the calculation of the homology groups of $\mathcal H'$ by the fact that this space is homotopy equivalent to a fiber bundle over $\bb{RP}^2$ with fibers $\bb{RP}^1$ homeomorphic to $S^1.$ 

\section{The space $\mathcal H$}\label{sec3}
Let us now consider the space $\mathcal H$ of all triples of lines in $\bb {RP}^3$ such that any line in a triple intersects the other two others at distinct points. We can homotopically identify $\mathcal H$ with the variety of complete flags in a four-dimensional vector space over the real numbers, $\Flag(\R^4)$. 

\begin{thm}\label{thm3}
	The space $\mathcal H$ is homotopy equivalent to $\Flag(\R^4).$
\end{thm}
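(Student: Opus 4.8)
The plan is to realize $\mathcal H$ concretely as a configuration space and then project it onto the flag variety by a ``partial span'' map whose fibers are contractible. First I would reinterpret a triple $(\ell_1,\ell_2,\ell_3)\in\mathcal H$ through its three pairwise intersection points. Writing $W_i\subset\R^4$ for the $2$-plane corresponding to $\ell_i$, the distinct-intersection hypothesis says the three subspaces $L_{jk}=W_j\cap W_k$ are distinct lines through the origin, and since $\ell_i$ is the unique projective line through the two points it carries, $W_i=L_{ij}+L_{ik}$. Thus sending a triple of lines to its opposite triple of vertices $(p_1,p_2,p_3)$, with $p_i=\ell_j\cap\ell_k$, is a homeomorphism of $\mathcal H$ onto the space of ordered triples of pairwise-noncollinear points in $\bb{RP}^3$. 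Lifting each point to a nonzero vector, noncollinearity is exactly linear independence, so $\mathcal H\cong M^{*}/T_3$, where $M^{*}$ is the open manifold of $4\times 3$ real matrices of full column rank and $T_3$ is the group of invertible diagonal $3\times 3$ matrices acting by scaling the three columns. Note that, in contrast to the $\bb{RP}^2$ case, the dual (normal-covector) model here is \emph{not} of full rank, which is precisely why the clean quotient $\GL(3,\R)/T$ is unavailable; the point-configuration model restores full column rank and is what makes the span construction possible.

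Next I would define $\Phi:\mathcal H\to\Flag(1,2,3,\R^4)$ by taking partial spans of the (scaled) columns $v_1,v_2,v_3$,
$$\Phi(v_1,v_2,v_3)=\bigl(\langle v_1\rangle\subset\langle v_1,v_2\rangle\subset\langle v_1,v_2,v_3\rangle\bigr).$$
This is well defined because independent column scalings preserve each of the three spans, and since a flag of signature $(0,1,2,3,4)$ is automatically complete, $\Flag(1,2,3,\R^4)=\Flag(\R^4)$. The key point is the shape of the fibers: over a flag $V_1\subset V_2\subset V_3$ the fiber consists of all $(v_1,v_2,v_3)$ modulo scaling with $\langle v_1\rangle=V_1$, $\langle v_1,v_2\rangle=V_2$ and $\langle v_1,v_2,v_3\rangle=V_3$, which identifies with $\{[V_1]\}\times\bigl(\bb P(V_2)\setminus\{[V_1]\}\bigr)\times\bigl(\bb P(V_3)\setminus\bb P(V_2)\bigr)\cong\R\times\R^2\cong\R^3$, a contractible space.

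I would then argue that $\Phi$ exhibits $\mathcal H$ as an iterated real affine-space bundle over $\Flag(\R^4)$: the factor $\bb P(V_2)\setminus\bb P(V_1)$ is the complement of the tautological section $\bb P(V_1)$ inside the $\bb{RP}^1$-bundle $\bb P(V_2)$, hence an affine-line bundle, and $\bb P(V_3)\setminus\bb P(V_2)$ is an affine-plane bundle; $\mathcal H$ is their fiber product over the flag variety. Local trivializations come from local frames of the tautological subbundles $V_1\subset V_2\subset V_3$ over $\Flag(\R^4)$, so $\Phi$ is a locally trivial fiber bundle with fiber $\R^3$. A fiber bundle over the paracompact (indeed compact, smooth) base $\Flag(\R^4)$ is a Hurewicz fibration, and a fibration with contractible fiber induces isomorphisms on all homotopy groups via its projection; since both spaces are smooth manifolds, hence CW complexes, Whitehead's theorem upgrades this weak equivalence to a genuine homotopy equivalence.

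The matrix identification and the fiber computation are routine; the step that needs genuine care — and which I expect to be the main obstacle — is establishing local triviality of $\Phi$, i.e. producing continuous local sections of $\Flag(\R^4)$ along which one can simultaneously and smoothly choose the normalizing data sweeping out $\bb P(V_2)\setminus\bb P(V_1)$ and $\bb P(V_3)\setminus\bb P(V_2)$. One cannot invoke Ehresmann's theorem here because $\Phi$ is not proper (its fibers are the noncompact $\R^3$), so the affine-bundle/local-frame argument must be carried out by hand rather than deduced from compactness of fibers. Finally I would remark that this very map and fiber computation also prove Theorem \ref{thm5}: the span $V_3=\langle v_1,v_2,v_3\rangle$ is $3$-dimensional regardless of the ambient $\bb{RP}^n$, so the fiber of the analogous $\Phi:\mathcal H\to\Flag(1,2,3,\R^{n+1})$ is $\R^3$ in every dimension.
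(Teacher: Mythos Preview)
Your argument is correct and follows a genuinely different route from the paper. The paper builds the flag from the \emph{normal vectors} $e_i\perp\ell_i$ inside the common $3$-subspace $V'$ (a metric construction), writes down an explicit right inverse $g:\Flag(\R^4)\to\mathcal H$ via an adapted orthonormal frame, and then argues that $g\circ f\simeq\id_{\mathcal H}$ by an informal ``rotation'' homotopy inside $V'$. You instead pass to the triangle's \emph{vertices} $p_i=\ell_j\cap\ell_k$, identify $\mathcal H$ with full-rank $4\times3$ matrices modulo column scaling, project to $\Flag(\R^4)$ by partial column spans, and recognize this projection as a locally trivial $\R^3$-bundle, finishing with the long exact sequence and Whitehead. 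Your approach is metric-free and more structural: the contractible-fiber bundle replaces the paper's hand-built homotopy; the step you single out as delicate (local triviality without Ehresmann, since the fibers are noncompact) is indeed the crux and is resolved exactly as you say by local frames of the tautological flag; and the same map proves Theorem~\ref{thm5} with no change. Inside $V'$ the two constructions are projectively dual --- the paper's $e_i$ is the cross product of the two vertices spanning $\ell_i$ --- so the resulting maps to $\Flag(\R^4)$ differ but are close relatives.
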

\begin{proof}
	Let $p=(\ell_1,\ell_2,\ell_3)$ be a point in $\mathcal H.$ Each $\ell_i$ corresponds to a $2$-subspace of $V=\R^4,$ and $\ell_1\cup \ell_2\cup \ell_3$ is contained in some $3$-subspace $V'$ of $V.$ Let $e_i$ be a vector orthogonal to $\ell_i$ in $V',$ and let $V_1=\langle e_1 \rangle,$ $V_2=\langle e_1,e_2 \rangle,$ $V_3=\langle e_1,e_2,e_3\rangle,$ and so $(0=V_0<V_1<V_2<V_3<V)$ is a complete flag in $\R^4$ and we define $f:\mathcal H \ra \Flag(\R^4)$ to be the map that takes $p$ to this complete flag. 
		
	A complete flag $(0=V_0<V_1<V_2<V_3<V=\R^4)$ has a unique orthonormal basis $(e_1,e_2,e_3,e_4)$ up to multiplying each basis element by a unit such that $0=V_0<V_1=\langle e_1\rangle < V_2=\langle e_1,e_2\rangle < V_3=\langle e_1,e_2,e_3\rangle < V=\langle e_1,e_2,e_3,e_4\rangle = \R^4.$ Let $\ell_i$ be the $2$-subspace of $V_3$ that is orthogonal to $e_i,$ note that each $\ell_i$ is also a $2$-subspace of $V$ and so a triple $p=(\ell_1,\ell_2,\ell_3)$ is a point in $\mathcal H,$ we define $g:\Flag(\R^4)\ra \mathcal H$ to be the map that takes a flag $(0=V_0<V_1<V_2<V_3<V=\R^4)$ to this point $p$.
	
	Note that both $f$ and $g$ are well-defined. Now $f\circ g:\Flag(\R^4)\ra \mathcal H\ra \Flag(\R^4)$ is the identity map on $\Flag(\R^4)$ and the composition $g\circ f: \mathcal H\ra \Flag(\R^4)\ra \mathcal H$ first maps a triple $(\ell_1,\ell_2,\ell_3)$ with respective orthogonal vectors $(e_1,e_2,e_3)$ in the $3$-subspace containing all $\ell_i$ to the corresponding complete flag $(0=V_0<V_1=\langle e_1\rangle < V_2=\langle e_1,e_2\rangle < V_3=\langle e_1,e_2,e_3\rangle < V = \R^4)$ which is then mapped to $(\ell_1'=\ell_1,\ell_2',\ell_3')$ where $\ell_i'$ is orthogonal to $e_i'$ in $V_3,$ $e_2'$ is $e_2$ rotated along along a vector in $V_3$ orthogonal to $V_2,$ and similarly $e_3'$ can be obtained from $e_3$ via a rotation in $V_3,$ so the composition $g\circ f$ is homotopic to the identity on $\mathcal H$. Hence, $\mathcal H$ and $\Flag(\R^4)$ are homotopy equivalent.
\end{proof}

Note that in the complex case there is a fibration $$\Flag(\C^{n-1})\ra \Flag(\C^n)\xra{p_{n-1}} \Gr(n-1,\C^n)\cong \bb{CP}^{n-1}$$ where $p_{n-1}$ is a projection map, namely $p_k:\Flag(\C^{n+k})\ra \Gr(k,\C^{n+k})$ is defined by mapping a point $(0<V_1<\cdots < V_{n+k})$ in $\Flag(\C^{n+k})$ to $V_k,$ and the map $p$ induces an injection in cohomology $p^*:H^*(\Gr(k,\C^{n+k}))\ra H^*(\Flag(\C^{n+k}))$ \cite{5}, and by a theorem of Borel we can write an isomorphism of the cohomology of $\Flag(\C^n)$ with the coinvariant algebra. 

In \cite{6}, Kocherlakota gives an algorithm for the calculation of integral homology groups of real flag manifolds, but in the spirit of Theorem \ref{thm1} and Theorem \ref{thm2}, let us explicitly calculate the homology groups of $\mathcal H\simeq \Flag(\R^4)$.

\begin{thm}\label{thm4}
	The integral homology groups of $\mathcal H\cong \Flag(\R^4),$ are as follows. $$H_0(\mathcal H)\cong H_6(\mathcal H)\cong\Z, H_1(\mathcal H)\cong H_4(\mathcal H)\cong (\Z/2\Z)^3,$$ $$H_2(\mathcal H)\cong (\Z/2\Z)^2, H_3(\mathcal H)\cong \Z^2\times (\Z/2\Z)^2,$$ and for any other subscript homology is trivial.
\end{thm}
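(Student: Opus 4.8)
The plan is to run the homological Serre spectral sequence of the fiber bundle $\pi\colon\mathcal H\to\bb{RP}^3$ that sends a triple $(\ell_1,\ell_2,\ell_3)$ to the unique projective plane it spans, viewed as a point of $\Gr(3,\R^4)\cong\bb{RP}^3$. The fiber over a plane is the space of triples of distinct-intersection lines inside it, a copy of $\mathcal H'\simeq\Flag(\R^3)$; concretely $\pi$ is the flag bundle of the rank-$3$ bundle $\gamma^\perp$ over $\bb{RP}^3$ (the orthogonal complement of the tautological line bundle), with structure group $\Ort(3)$. I would feed in the fiber homology from Theorem \ref{thm2}, namely $H_0(\mathcal H')=H_3(\mathcal H')=\Z$, $H_1(\mathcal H')=(\Z/2\Z)^2$, $H_2(\mathcal H')=0$, together with $H_*(\bb{RP}^3;\Z)=(\Z,\Z/2\Z,0,\Z)$ for $*=0,1,2,3$.

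Before writing the $E^2$ page I must identify the local system, i.e. the monodromy of $\pi_1(\bb{RP}^3)\cong\Z/2\Z$ on $H_*(\mathcal H')$. The cleanest argument is that the monodromy factors through the structure group $\Ort(3)$ acting on $\Flag(\R^3)$, and this action is homotopically trivial on each component: $\SO(3)$ is connected, hence acts trivially on homology, while the other component is the connected coset $\SO(3)\cdot(-I)$, and $-I$ fixes every subspace of $\R^3$ and so acts as the literal identity on $\Flag(\R^3)$. Thus all of $\Ort(3)$ acts trivially on $H_*(\Flag(\R^3))$, the monodromy is trivial, and every coefficient in the spectral sequence is untwisted. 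In particular the action on $H_3(\mathcal H')=\Z$ is trivial, which will give orientability.

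With untwisted coefficients the only nonzero entries sit in the rows $q=0,1,3$, and inspection shows that for degree and torsion reasons every differential vanishes except possibly the single $d_2\colon E^2_{3,0}=\Z\to E^2_{1,1}=(\Z/2\Z)^2$. This is the main obstacle. I would dispose of it by exhibiting an explicit section of $\pi$, which forces the edge homomorphism $\pi_*\colon H_*(\mathcal H)\to H_*(\bb{RP}^3)$ to be surjective and hence the bottom row to survive to $E^\infty$. Identifying $\R^4$ with the quaternions $\mathbb{H}$, a line $\langle v\rangle$ with $|v|=1$ carries the orthonormal frame $(iv,jv,kv)$ of $v^\perp$, and the flag $\langle iv\rangle\subset\langle iv,jv\rangle\subset v^\perp$ depends only on $\langle v\rangle=\langle -v\rangle$; this is a continuous section $\bb{RP}^3\to\mathcal H$. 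Consequently $d_2=0$, the spectral sequence collapses, and $E^2=E^\infty$.

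It then remains to solve the extension problems. The groups $H_0\cong H_6\cong\Z$, $H_2\cong(\Z/2\Z)^2$ and $H_5=0$ are read off directly, and $H_6\cong\Z$ exhibits $\mathcal H$ as a closed orientable $6$-manifold. For $H_1$ the section splits the sequence $0\to(\Z/2\Z)^2\to H_1\to\Z/2\Z\to0$, giving $H_1\cong(\Z/2\Z)^3$. The residual extensions in $H_3$ (between $E^\infty_{0,3}=\Z$ and $E^\infty_{2,1}=(\Z/2\Z)^2$) and in $H_4$ I would settle by Poincar\'e duality and the universal coefficient theorem: $H_3(\mathcal H)\cong H^3(\mathcal H)\cong\Hom(H_3,\Z)\oplus\Ext^1(H_2,\Z)\cong\Z^2\oplus(\Z/2\Z)^2$ and $H_4(\mathcal H)\cong H^2(\mathcal H)\cong\Ext^1(H_1,\Z)\cong(\Z/2\Z)^3$, where the free rank $2$ in degree $3$ comes from the two surviving copies of $\Z$ on the $E^\infty$ page. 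This pins down the torsion and yields exactly the stated groups.
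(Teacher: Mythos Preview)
Your argument is correct. It shares with the paper the fiber bundle $\pi\colon\mathcal H\to\bb{RP}^3$ with fiber $\mathcal H'$ and the combined use of the Serre spectral sequence, Poincar\'e duality, and the universal coefficient theorem, but the logical flow is genuinely different. The paper first passes to the universal cover $\Spin(4)\cong S^3\times S^3$ and writes $\mathcal H\simeq\Spin(4)/K^*$ with $K^*\cong D_8\times\Z/2\Z$; from this it reads off $H_0,H_6$ by orientability, obtains $H_1\cong(\Z/2\Z)^3$ via Hurewicz as the abelianization of $K^*$, and then cascades Poincar\'e duality and UCT to get $H_5=0$ and $H_4\cong(\Z/2\Z)^3$, invoking the (cohomological) spectral sequence only at the end for $H_2$ and $H_3$. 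You instead run the homological spectral sequence from the start, kill the single dangerous differential $d_2\colon E^2_{3,0}\to E^2_{1,1}$ by exhibiting an explicit quaternionic section $\langle v\rangle\mapsto\bigl(\langle iv\rangle\subset\langle iv,jv\rangle\subset v^\perp\bigr)$, and use that same section to split the extension for $H_1$; only the residual extensions in degrees $3$ and $4$ are then settled by PD and UCT. Your verification that the local system is untwisted---via the observation that $-I\in\Ort(3)$ acts as the literal identity on $\Flag(\R^3)$---is also more explicit than the paper's bare assertion. The trade-off is that the paper's covering-space route additionally identifies $\pi_1(\mathcal H)\cong D_8\times\Z/2\Z$, a strictly finer invariant than $H_1$, whereas your section gives a clean geometric reason for the collapse $E^2=E^\infty$ without ever computing $\pi_1$.
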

\begin{proof}
	By the transitive action of the general linear group on the space of complete flags in $\R^4$, we see that $\Flag(\R^4)$ is $\GL(4,\R)/U$ where $U$ is the maximal closed torus consisting of upper triangular matrices. This space is homotopy equivalent to $\Ort(4,\R)/(T\cap \Ort(4,\R))$ where $T\cap \Ort(4,\R)$ consists of matrices with $\pm 1$ on the main diagonal. 
	
	The Lie group $\Ort(4,\R)$ has two connected components and there is an element of $T\cap \Ort(4,\R)$ with negative determinant that can take an element of the connected component consisting of orthogonal matrices with determinant $-1$ to the other connected component $\SO(4,\R)$, hence, we have that $\Ort(4,\R)/(T\cap \Ort(4,\R))$ is diffeomorphic to $\SO(4,\R)/(T\cap \SO(4,\R))$ where $T\cap \SO(4,\R)$ is isomorphic to the elementary abelian group of order $8,$ $E_8,$ also isomorphic to $(\Z/2\Z)^3.$ 
	
	The universal covering group of $\SO(4,\R)$ is $\Spin(4)\cong \SU(2)\times \SU(2)\cong S^3\times S^3$ with the kernel of the universal cover map $\pi:\Spin(4)\ra \SO(4)$ being $Q:=\ker \pi \cong \Z/2\Z.$ Let $K^*$ be the lifting of $K:=T\cap \SO(4,\R)$ to $\Spin(4),$ it is isomorphic to $D_{8}\times \Z/2\Z$. We want to find the homology of $\Spin(4)/ K^*\simeq \SO(4,\R)/K.$ Since the defining equivalence relation acts smoothly and freely on $\Spin(4),$ the dimension of the quotient is $\dim \SO(4)-\dim K^* = 4\cdot 3 / 2 - 0 = 6.$ Since $\SO(4)$ is compact and connected, the quotient is compact and connected. The quotient is also orientable because $\SO(4)$ is closed, orientable and the action of the discrete subgroup $K^*$ is orientation preserving. Thus, $H_0(\mathcal H)\cong H_6(\mathcal H)\cong \Z.$ 
	
	Note that $\Spin(4)\simeq S^3\times S^3$ has integral homology $H_0(\Spin(4))\cong H_6(\Spin(4))\cong\Z$, $H_3(\Spin(4))\cong \Z^2$ and other homology groups are trivial. By Hurewicz theorem, $H_1(\mathcal H)\cong H_1(\Spin(4)/K^*)$ is isomorphic to the abelianization of $\pi_1(\Spin(4)/K^*)\cong K^*\cong D_8\times \Z/2\Z$ which is isomorphic to $(\Z/2\Z)^3$. By Poincaré duality $H_5(\mathcal H)$ is isomorphic to $H^1(\mathcal H),$ and by the universal coefficients theorem for cohomology we have an exact sequence $$0\ra \Ext^1(H_0(\mathcal H,\Z),\Z)\ra H^1(\mathcal H,\Z)\ra \Hom(H_1(\mathcal H,\Z),\Z)\ra 0$$ where $\Ext^1(H_0(\mathcal H,\Z),\Z)$ is trivial and $\Hom(H_1(\mathcal H,\Z),\Z)$ is also trivial, so $H_5(\mathcal H,\Z)\cong H^1(\mathcal H,\Z)$ is trivial. Again by universal coefficients theorem for cohomology we have $\Ext^1(H_4(\mathcal H,\Z),\Z)\cong (\Z/2\Z)^3$ and since $H_4(\mathcal H)$ is finitely generated and since it has no free part, we must have $H_4(\mathcal H)\cong (\Z/2\Z)^3.$ 

	Let $\pi:\mathcal H\ra \bb{RP}^3$ be defined as follows. Consider $\bb{RP}^3$ as the space of projective $2$-planes. A triple $p=(\ell_1,\ell_2,\ell_3)\in \mathcal H$ maps to the unique plane defined by $p,$ $\pi(p)\cong \bb{RP}^2,$ the space $\mathcal H$ is a fiber bundle over $\bb{RP}^3.$ A fiber of this bundle over a projective $2$-plane, a point $A\cong \bb{RP}^2$ in $\bb{RP}^3$ is the space of all triples of lines in $A$ such that any line in a triple intersects the two others at distinct points, hence a fiber is precisely homeomorphic to $\mathcal H'$ whose homology we already calculated in Theorem \ref{thm2}. 
	
	Since the homology of the base space and the fiber are known, and since the base space $\bb {RP}^3$ is path-connected, admits the structure of a CW-complex, and $\pi_1(\bb{RP}^3)$ acts trivially on $H^*(\mathcal H',\Z),$ there is a Serre spectral sequence of our fiber bundle $\pi:\mathcal H\ra \bb{RP}^3$. The cohomology of $\mathcal H'$ is $H^0(\mathcal H',\Z)=H^3(\mathcal H',\Z)=\Z$, $H^2(\mathcal H',\Z)=V_4$ and any other cohomology group is trivial. The cohomology of $\mathbb{RP}^3$ is $H^0(\bb{RP}^3,\Z)=H^3(\bb{RP}^3,\Z)=\Z,$ $H^2(\bb{RP}^3,\Z)=\Z/2\Z$ and any other cohomology group is trivial. 
	
	Hence, by $E_2^{p,q}=H^p(\bb{RP}^3,H^q(\mathcal H',\mathbb Z)),$ the second page $E_2$ of the Serre spectral sequence corresponding to our fiber bundle is as below. $$\begin{matrix} \mathbb Z & 0 & \mathbb Z/2\mathbb Z & \mathbb Z \\ V_4 & V_4 & V_4 & V_4 \\ 0 & 0 & 0 & 0 \\ \mathbb Z & 0 & \mathbb Z/2\mathbb Z & \mathbb Z\end{matrix}$$ The only non-trivial differential is $d_2^{0,3}:E^{0,3}_2\rightarrow E_2^{2,2}$ and since it is a homomorphism from $\Z$ to $V_4$ it cannot be surjective, hence the index $(2,2)$ survives in the $E_3$ page. By degree reasons $E_\infty$ degenerates to $E_3$ where every term in $E_3$ is same as that of $E_2$ because by the universal coefficients theorem for cohomology and the Poincaré duality, $\Ext^1(H_3(\mathcal H,\Z),\Z)\cong H_2(\mathcal H,\Z),$ so we must have $H_3(\mathcal H,\Z)\cong H^3(\mathcal H,\Z)\cong \Z^2\times V_4 \cong \Z^2\times (\Z/2\Z)^2$ and $H_2(\mathcal H,\Z)\cong H^4(\mathcal H,\Z)\cong (\Z/2\Z)^2$.
\end{proof}

\section{The space of triples of projective lines of distinct intersections in $\bb{RP}^n$}\label{sec4}
The proof of Theorem \ref{thm3} can be generalized to the space of all triples of projective lines of distinct intersections in $\bb{RP}^n.$ 

\begin{thm}\label{thm5}
	The space of all triples of projective lines in $\bb{RP}^{n}$ such that any line in a triple intersects the two others at distinct points is homotopy equivalent to the real partial flag variety $\Flag(1,2,3,\R^{n+1})$ consisting of partial flags of signature $(0,1,2,3,n+1)$.
\end{thm}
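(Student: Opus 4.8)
The plan is to generalize the explicit homotopy equivalence of Theorem \ref{thm3} essentially verbatim, replacing the ambient $\R^4$ by $\R^{n+1}$ and the complete flag by a partial flag of signature $(0,1,2,3,n+1)$. Write $\mc H_n$ for the space of triples under consideration. The first point to nail down is that the intersection condition forces every triple to span a fixed-dimensional subspace: if $\ell_1,\ell_2,\ell_3$ correspond to $2$-planes $L_1,L_2,L_3\subset\R^{n+1}$, then $\ell_1,\ell_2$ meeting at a point means $\dim(L_1\cap L_2)=1$, so $W:=L_1+L_2$ is $3$-dimensional; since $\ell_3$ meets $\ell_1$ and $\ell_2$ at two distinct points of the projective plane $\mathbb P(W)$, it contains two distinct points of $\mathbb P(W)$ and hence lies in $\mathbb P(W)$. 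Thus $W=L_1+L_2+L_3$ is always $3$-dimensional, and inside $W\cong\R^3$ the triple is exactly a point of $\mc H'$; in particular the inward normals $e_i$ of $L_i$ within $W$ are linearly independent, by the analysis preceding Theorem \ref{thm1}.

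First I would define $f:\mc H_n\to\Flag(1,2,3,\R^{n+1})$ as in Theorem \ref{thm3}: given $p=(\ell_1,\ell_2,\ell_3)$ with span $W$, take $e_i$ orthogonal to $L_i$ inside $W$ and set $V_1=\langle e_1\rangle$, $V_2=\langle e_1,e_2\rangle$, $V_3=\langle e_1,e_2,e_3\rangle=W$. Linear independence of the $e_i$ gives $\dim V_j=j$, so this is a genuine flag of signature $(0,1,2,3,n+1)$. In the reverse direction $g$ sends a flag $(V_1\subset V_2\subset V_3)$ to the triple $(\ell_1,\ell_2,\ell_3)$ where, choosing the orthonormal basis $(e_1,e_2,e_3)$ of $V_3$ adapted to the flag, $\ell_i$ corresponds to $e_i^\perp\cap V_3$. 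Well-definedness of $g$ (independence from the sign ambiguities in the adapted basis) and the identity $f\circ g=\id$ are formal and identical to the $n=3$ case.

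The substance is the homotopy $g\circ f\simeq\id_{\mc H_n}$, and this is where I expect the only real work. As in Theorem \ref{thm3}, $g\circ f$ preserves $W$ and fixes $\ell_1$, but replaces $e_2,e_3$ by the Gram--Schmidt orthonormalization $e_2',e_3'$ relative to the flag, which differ from $e_2,e_3$ by a rotation of $W$ preserving $V_1\subset V_2\subset V_3$. The point I would make precise is that this entire construction takes place inside the single $3$-space $W=V_3$ and is natural in $W$: the orthogonal projection onto $W$ and the orthonormalization of a linearly independent frame are continuous in the frame, so the rotations interpolating $e_i'$ back to $e_i$ assemble into a global homotopy $\mc H_n\times[0,1]\to\mc H_n$ from $g\circ f$ to the identity. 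The one subtlety, absent in the fixed-ambient case of Theorem \ref{thm3}, is that $W$ itself now varies over $\Gr(3,\R^{n+1})$, so I must check that the straightening homotopy depends continuously on $p$ through this varying $W$ as well.

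An alternative, more structural framing makes that continuity transparent and I would include it as the cleanest route. Both $\mc H_n$ and $\Flag(1,2,3,\R^{n+1})$ are fiber bundles over $\Gr(3,\R^{n+1})$ via $p\mapsto W$ and $(V_1\subset V_2\subset V_3)\mapsto V_3$, with fibers $\mc H'$ and $\Flag(\R^3)$ respectively. The map $f$ is fiber-preserving, covering $\id_{\Gr(3,\R^{n+1})}$, and on the fiber over each $W$ it restricts to precisely the homotopy equivalence $\mc H'\simeq\Flag(\R^3)$ of Theorem \ref{thm3} carried out inside $W$. By Dold's theorem, a fiber-preserving map between fibrations over a paracompact base that is a homotopy equivalence on every fiber is a fiber homotopy equivalence, hence a homotopy equivalence of total spaces, which concludes the proof. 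The main obstacle in this route is exactly the same Gram--Schmidt continuity noted above, now repackaged as the verification that $f$ is a genuine continuous bundle morphism and restricts fiberwise to the equivalence of Theorem \ref{thm3}.
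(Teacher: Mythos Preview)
Your primary construction via the explicit maps $f$ and $g$ is exactly the paper's proof, and you are in fact more careful than the paper on two points: you argue why the three $2$-planes necessarily span a $3$-dimensional subspace $W$, and you flag the continuity of the straightening homotopy as $W$ varies over $\Gr(3,\R^{n+1})$, both of which the paper passes over in silence. Your alternative via Dold's theorem is not in the paper; it is a cleaner and more robust route, since it reduces the global statement to the already-established fiberwise equivalence $\mathcal H'\simeq\Flag(\R^3)$ over a common base and sidesteps the need to exhibit an explicit global homotopy for $g\circ f$.
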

\begin{proof}
	The difference with the proof of Theorem \ref{thm3} is that we cannot complete a basis of a three-dimensional subspace of $V=\R^n$ uniquely to get a complete flag in $\Flag(\R^{n+1}),$ but instead given a triple of projective lines, we can construct a unique partial flag of signature $(0,1,2,3,n+1).$
	
	Let $\mathcal M$ be the space of all triples of projective lines of distinct intersections. Let $p=(\ell_1,\ell_2,\ell_3)$ be a point in $\mathcal M.$ Each $\ell_i$ corresponds to a $2$-subspace of $V=\R^{n+1},$ and $\ell_1\cup \ell_2\cup \ell_3$ is contained in some $3$-subspace $V'$ of $V.$ Let $e_i$ be a vector orthogonal to $\ell_i$ in $V',$ and let $V_1=\langle e_1 \rangle,$ $V_2=\langle e_1,e_2 \rangle,$ $V_3=\langle e_1,e_2,e_3\rangle,$ and so $(0=V_0<V_1<V_2<V_3<V)$ is a partial flag in $\R^{n+1}$ and we define $f:\mathcal M \ra \Flag(1,2,3,\R^{n+1})$ to be the map that takes $p$ to this partial flag. 
	
	Given a partial flag $(0=V_0<V_1<V_2<V_3<V=\R^{n+1})$ with signature $(0,1,2,3,n+1),$ we can construct an orthonormal basis $(e_1,e_2,e_3)$ unique up to multiplying each basis element by a unit such that $0=V_0<V_1=\langle e_1\rangle < V_2=\langle e_1,e_2\rangle < V_3=\langle e_1,e_2,e_3\rangle < V.$ Let $\ell_i$ be the $2$-subspace of $V_3$ which is orthogonal to $e_i.$ Note that each $\ell_i$ is also a $2$-subspace of $V$ and so a triple $p=(\ell_1,\ell_2,\ell_3)$ is a point in $\mathcal M.$ We define $g:\Flag(1,2,3,\R^4)\ra \mathcal M$ to be the map that takes a flag $(0=V_0<V_1<V_2<V_3<V=\R^{n+1})$ to this point $p$. Both $f$ and $g$ are well-defined. 
	
	Now $f\circ g:\Flag(1,2,3,\R^4)\ra \mathcal M\ra \Flag(1,2,3,\R^4)$ is the identity map on $\Flag(1,2,3,\R^4)$ and the composition $g\circ f: \mathcal M\ra \Flag(1,2,3,\R^4)\ra \mathcal M$ first maps a triple $(\ell_1,\ell_2,\ell_3)$ with respective orthogonal vectors $(e_1,e_2,e_3)$ in the $3$-subspace containing all $\ell_i$ to the corresponding complete flag $(0=V_0<V_1=\langle e_1\rangle < V_2=\langle e_1,e_2\rangle < V_3=\langle e_1,e_2,e_3\rangle < V)$ which is then mapped to $(\ell_1'=\ell_1,\ell_2',\ell_3')$ where $\ell_i'$ is orthogonal to $e_i'$ in $V_3,$ and the orthonormal triple $(e_1',e_2',e_3')$ is obtained from $(e_1,e_2,e_3)$ via some rotation, and so the composition $g\circ f$ is homotopic to the identity on $\mathcal M$. Hence, $\mathcal M$ and $\Flag(1,2,3,\R^4)$ are homotopy equivalent.
\end{proof}

Since there is an algorithm by Kocherlakota \cite{6} to compute the integral homology of real flag manifolds, this particular projective configuration space is well-understood in terms of (co)homology.
		
\section{Acknowledgement}
The author is grateful to his advisor at HSE University, Viktor Vassiliev, for recommending the problem and for helpful discussions.
\vfill

 \Address
\end{document}